%
%
%



\documentclass{daj}

\usepackage{amsmath, amsfonts, amsthm}
\usepackage[utf8]{inputenc}
\usepackage{graphicx}

\newtheorem{lemma}{Lemma}[section]
\newtheorem{theorem}[lemma]{Theorem}


\newcommand{\Q}{\mathbb{Q}}
\newcommand{\R}{\mathbb{R}}


\newcommand{\Fcal}{\mathcal{F}}

\newcommand{\Pcal}{\mathcal{P}}
\newcommand{\Rcal}{\mathcal{R}}

\DeclareMathOperator{\conv}{conv}         
\DeclareMathOperator{\cutp}{CUT_\square}  

\newcommand{\rey}{R}                   
\newcommand{\orto}{\mathrm{O}}         
\newcommand{\alphaGW}{\alpha_{\mathrm{GW}}}  
\newcommand{\fGW}{f_{\mathrm{GW}}}
\newcommand{\tGW}{t_{\mathrm{GW}}}
\newcommand{\GW}{{\mathrm{GW}}}

\DeclareMathOperator{\sdp}{SDP}        

\newcommand{\optprob}[1]{{\arraycolsep=0pt%
  \begin{array}{r@{\ }l@{\quad}l}
    #1
  \end{array}}}

\newcommand{\defi}[1]{\textit{#1}}

\dajAUTHORdetails{%
  title = {On the Integrality Gap of the Maximum-Cut Semidefinite Programming Relaxation in Fixed Dimension},
  author = {Fernando M\'ario de Oliveira Filho and Frank Vallentin},
  plaintextauthor = {Fernando Mario de Oliveira Filho and Frank Vallentin},
    %
    %
  plaintexttitle = {On the Integrality Gap of the Maximum-Cut
    Semidefinite Programming Relaxation in Fixed Dimension},  
    %
  runningtitle = {The Integrality Gap of the Maximum-cut Semidefinite
    Relaxation in Fixed Dimension},
    %
  runningauthor = {F.M. de Oliveira Filho and F. Vallentin},
    %
   %
  keywords = {maximum-cut problem, semidefinite programming, integrality gap},
}   

\dajEDITORdetails{%
   year={2020},
   number={10},
   received={9 August 2018},   
   published={6 August 2020},  
   doi={10.19086/da.14164},       
}   



\begin{document}

\begin{frontmatter}[classification=text]


\author[fmof]{Fernando M\'ario de Oliveira Filho}
\author[frank]{Frank Vallentin\thanks{This project has received
  funding from the European Union's Horizon 2020 research and
  innovation programme under the Marie Sk\l{}odowska-Curie agreement
  No 764759. The second author is partially supported by the SFB/TRR 191
  ``Symplectic Structures in Geometry, Algebra and Dynamics'' and by
  the project ``Spectral bounds in extremal discrete geometry''
  (project number 414898050), both funded by the DFG.}}

\begin{abstract}
  We describe a factor-revealing convex optimization problem for the
  integrality gap of the maximum-cut semidefinite programming
  relaxation: for each $n \geq 2$ we present a convex optimization
  problem whose optimal value is the largest possible ratio between
  the value of an optimal rank-$n$ solution to the relaxation and the
  value of an optimal cut. This problem is then used to compute lower
  bounds for the integrality gap.
\end{abstract}
\end{frontmatter}


\section{Introduction}

For~$x$, $y \in \R^n$, write $x \cdot y = x_1 y_1 + \cdots + x_n y_n$
for the Euclidean inner product.
Let~$S^{n-1} = \{\, x \in \R^n : x \cdot x = 1\,\}$ be the
$(n-1)$-dimensional unit sphere. Given a nonempty finite set~$V$, a
nonnegative matrix~$A \in \R^{V \times V}$, and an integer~$n \geq 1$,
write
\begin{equation}
  \label{eq:sdp-def}
  \sdp_n(A) = \max\biggl\{\, \sum_{x, y \in V} A(x, y) (1 - f(x) \cdot f(y))
  : f\colon V \to S^{n-1}\,\biggr\}.
\end{equation}
Replacing~$S^{n-1}$ above by~$S^\infty$, the set of all
sequences~$(a_k)$ such that~$\sum_{k=0}^\infty a_k^2 = 1$, we obtain
the definition of~$\sdp_\infty(A)$.

Given a finite (loopless) graph~$G = (V, E)$ and a nonnegative weight
function~$w\colon E \to \R_+$ on the edges of~$G$, the
\defi{maximum-cut problem} asks for a set~$S \subseteq V$ that
maximizes the weight
\[
  \sum_{e \in \delta(S)} w(e) = \sum_{\substack{x \in S, y \in V
      \setminus S\\xy \in E}} w(xy)
\]
of the cut $\delta(S) = \{\, e \in E : |e \cap S| =
1\,\}$. If~$A\colon V \times V \to \R$ is the matrix such
that~$A(x, y) = w(xy)$ when~$xy \in E$ and~$A(x, y) = 0$ otherwise,
then the weight of a maximum cut is~$(1/4) \sdp_1(A)$.

$\sdp_n(A)$ is actually the optimal value of a
semidefinite program with a rank constraint, namely
\begin{equation}
  \label{eq:sdp-problem}
  \optprob{\text{max}&\sum_{x, y \in V} A(x, y) (1 - M(x, y))\\
    &M(x, x) = 1\quad\text{for~$x \in V$},\\
    &\text{$M \in \R^{V \times V}$ is positive semidefinite and has
      rank at most~$n$.}
  }
\end{equation}
In~$\sdp_\infty(A)$ the rank constraint is simply dropped. The
optimization problem $\sdp_\infty(A)$ is the
\defi{semidefinite programming relaxation} of the maximum-cut problem.

Obviously,~$\sdp_\infty(A) \geq \sdp_1(A)$. In a fundamental paper,
Goemans and Williamson~\cite{GoemansW1995} showed that, if~$A$ is a
nonnegative matrix, then
\[
  \sdp_1(A) \geq \alphaGW \sdp_\infty(A),
\]
where
\[
  \alphaGW = \min_{t \in [-1, 1]} \frac{1 - (2 / \pi) \arcsin t}{1 -
    t} = 0.87856\ldots.
\]

The \defi{$n$-dimensional integrality gap} of the
semidefinite programming relaxation is
\[
  \gamma_n = \sup \Bigl\{\, \frac{\sdp_n(A)}{\sdp_1(A)} :
  \text{$A$ is a nonnegative matrix}\,\Bigr\},
\]
but it is often more natural to work with its
reciprocal~$\alpha_n = \gamma_n^{-1}$.  Goemans and Williamson thus
showed that~$\alpha_\infty \geq \alphaGW$; Feige and
Schechtman~\cite{FeigeS2001} later showed
that~$\alpha_\infty = \alphaGW$ (see also~\S8.3 in G\"artner and
Matou\v{s}ek~\cite{GartnerM2012}).

In dimension~2 it is known that
\begin{equation}
  \label{eq:alpha-2}
  \alpha_2 = \frac{32}{25 + 5\sqrt{5}} = 0.88445\ldots.
\end{equation}
The `$\leq$' direction was shown by Delorme and
Poljak~\cite{DelormeP1993b, DelormeP1993a}; the `$\geq$' direction was
shown by Goemans in an unpublished note (cf.~Avidor and
Zwick~\cite{AvidorZ2005}, who also provide another proof of this
result).  Avidor and Zwick~\cite{AvidorZ2005} showed
that~$\alpha_3 \geq 0.8818$. Except for~$n = 2$ and~3, it is an open
problem whether~$\alpha_n > \alpha_\infty = \alphaGW$.


\subsection{A factor-revealing optimization problem}
\label{sec:factor-revealing}

Theorem~\ref{thm:main} below gives a factor-revealing optimization
problem for~$\alpha_n$: an optimization problem defined for
each~$n \geq 2$ whose optimal value is~$\alpha_n$. Relaxations of it
can be solved by computer to give upper bounds for~$\alpha_n$, as done
in~\S\ref{sec:bounds}.

For a finite and nonempty set~$U$, write
\[
  \cutp(U) = \conv\{\, f \otimes f^* : f\colon U \to \{-1,1\}\,\},
\]
where~$f\otimes f^*$ is the external product of the vector~$f$, that
is, the matrix whose entry~$(x, y)$ is~$f(x) f(y)$. This set is known
as the \defi{cut polytope} and was extensively
investigated~\cite{DezaL1997}.

A \defi{kernel} is a square-integrable (with respect to the Lebesgue
measure) real-valued function on~$S^{n-1} \times S^{n-1}$; the set of
continuous kernels is denoted by $C(S^{n-1} \times S^{n-1})$. Write
\begin{multline*}
  \cutp(S^{n-1}) = \{\, K \in C(S^{n-1} \times S^{n-1}) : \bigl(K(x,
  y)\bigr)_{x,y \in U} \in \cutp(U)\\
  \text{for every finite and nonempty~$U \subseteq S^{n-1}$}\,\}.
\end{multline*}
In principle, it is not clear that anything other than the constant~1
kernel is in~$\cutp(S^{n-1})$. If~$f\colon S^{n-1} \to \{-1,1\}$ is
any nonconstant measurable function, then one could be tempted to say
that~$(x, y) \mapsto f(x) f(y)$ is in~$\cutp(S^{n-1})$, but no such
kernel is continuous, so to see that~$\cutp(S^{n-1})$ is nontrivial
requires a more complicated argument. Fix~$e \in S^{n-1}$ and
let~$f(x)$ be~$1$ if~$e \cdot x \geq 0$ and~$-1$
otherwise; let~$K(x, y) = (2/\pi) \arcsin x\cdot y$ for every~$x$, $y
\in S^{n-1}$. Grothendieck's identity says that
\[
  K(x, y) = \int_{\orto(n)} f(Tx) f(Ty)\, d\mu(T),
\]
where~$\orto(n)$ is the group of~$n \times n$ orthogonal matrices
and~$\mu$ is the Haar measure on~$\orto(n)$ normalized so the total
measure is~1. Then it is easy to see that~$K$ is continuous and that
it belongs to~$\cutp(S^{n-1})$. This kernel was first considered by
McMillan~\cite{McMillan1955}, who was perhaps the first to use such an
infinite-dimensional analogue of the cut polytope.

We say that a kernel~$K$ is \defi{invariant} if~$K(Tx, Ty) = K(x, y)$
for all~$T \in \orto(n)$ and~$x$, $y \in S^{n-1}$. An invariant kernel
is in fact a univariate function, since the value of~$K(x, y)$ depends
only on the inner product~$x \cdot y$. Hence for~$t \in [-1, 1]$ we
write~$K(t)$ for the common value taken by~$K$ on pairs~$(x, y)$ with
inner product~$t$.

\begin{theorem}
  \label{thm:main}
  If~$n \geq 2$, then~$\alpha_n$ is the optimal value of
  \begin{equation}
    \label{eq:main-prob}
    \optprob{\sup&\alpha\\
      &1 - K(t) \geq \alpha(1 - t)\quad\text{for all~$t \in [-1,
        1]$},\\
      &\text{$K \in \cutp(S^{n-1})$ is invariant}.
    }
  \end{equation}
\end{theorem}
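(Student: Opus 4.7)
Write $\alpha^*$ for the optimum of~\eqref{eq:main-prob}. For the inequality $\alpha^* \leq \alpha_n$, I would take a feasible pair $(\alpha, K)$, a nonnegative~$A$ on finite~$V$, and an optimal $f\colon V \to S^{n-1}$ for $\sdp_n(A)$; the matrix $(K(f(x),f(y)))_{x,y \in V}$ pulls back an element of $\cutp(f(V))$ along~$f$ and so decomposes as $\sum_i \lambda_i\, g_i \otimes g_i^*$ with $g_i\colon V \to \{-1,1\}$, giving
\[
  \sum_{x,y} A(x, y)(1 - K(f(x), f(y))) = \sum_i \lambda_i \sum_{x, y} A(x,y)(1 - g_i(x) g_i(y)) \leq \sdp_1(A)
\]
with each inner sum a cut value. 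Applying $1 - K(t) \geq \alpha(1 - t)$ at $t = f(x)\cdot f(y)$ (using invariance of $K$) then gives $\sdp_1(A) \geq \alpha\, \sdp_n(A)$, so $\alpha \leq \alpha_n$.

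For $\alpha^* \geq \alpha_n$ I would argue by contradiction: fix $\alpha < \alpha_n$ and suppose no invariant $K \in \cutp(S^{n-1})$ satisfies $1 - K(t) \geq \alpha(1 - t)$ for every $t \in [-1, 1]$. Viewed as functions $[-1, 1] \to [-1, 1]$, the invariant elements of $\cutp(S^{n-1})$ have a pointwise closure compact by Tychonoff and still satisfying the cut-polytope condition on every finite $U \subseteq S^{n-1}$. The finite intersection property applied to the closed conditions $K(t) \leq 1 - \alpha(1 - t)$ yields a finite set $\{t_1, \ldots, t_k\}$ and, by finite-dimensional separation in $\R^k$, nonnegative weights $\lambda_1, \ldots, \lambda_k$, not all zero, with
\[
  \sup_K \sum_i \lambda_i (1 - K(t_i)) \;<\; \alpha \sum_i \lambda_i (1 - t_i),
\]
the supremum ranging over invariant~$K$.

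I would then turn this certificate into a matrix. Picking $(x_i, y_i) \in S^{n-1} \times S^{n-1}$ with $x_i \cdot y_i = t_i$ and forming the $\orto(n)$-invariant positive measure $\tilde\sigma = \sum_i \lambda_i \int_{\orto(n)} \delta_{(Tx_i, Ty_i)}\, d\mu(T)$, the identity
\[
  \sup_{K \text{ invariant in } \cutp(S^{n-1})} \int(1 - K(x,y))\, d\tilde\sigma \;=\; \sup_{g\colon S^{n-1} \to \{-1, 1\}} \int(1 - g(x) g(y))\, d\tilde\sigma
\]
(which follows from the Grothendieck-type construction $K_g(x, y) = \int_{\orto(n)} g(Tx) g(Ty)\, d\mu(T)$, automatically invariant, continuous, in $\cutp(S^{n-1})$, and integrating identically against $\tilde\sigma$ by $\orto(n)$-invariance) converts the bound into $\sup_g \int(1 - g(x) g(y))\, d\tilde\sigma < \alpha \int(1 - x\cdot y)\, d\tilde\sigma$. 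A random discretization of~$\tilde\sigma$ by sampling from its orbits then produces, for each~$N$, a nonnegative matrix $A_N$ on a finite $V_N \subseteq S^{n-1}$ with $\sdp_n(A_N) \geq \int(1 - x\cdot y)\, d\tilde\sigma$ (the inclusion is a feasible rank-$n$ solution) and $\sdp_1(A_N) \leq \sup_g \int(1 - g(x) g(y))\, d\tilde\sigma + o_N(1)$ (by extending any optimal cut of~$V_N$ to a measurable function on the sphere); for large~$N$ this gives $\sdp_1(A_N)/\sdp_n(A_N) < \alpha < \alpha_n$, contradicting the definition of~$\alpha_n$.

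I expect the main obstacle to be the compactness step: pointwise limits of continuous invariant kernels need not be continuous, so a priori the finite certificate is produced in a larger (non-continuous) class, and a density or mollification argument is needed to ensure that the supremum in~\eqref{eq:main-prob} is unchanged when $K$ is allowed to be discontinuous. The discretization step is also technical, requiring a uniform law-of-large-numbers estimate over the $2^{|V_N|}$ cuts of~$V_N$.
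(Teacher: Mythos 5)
Your easy direction ($\alpha^* \leq \alpha_n$) is exactly the paper's argument. For the hard direction you take a genuinely different route — pointwise compactness plus finite-dimensional separation to extract a finite certificate $\{t_1,\dots,t_k\}$, then a discretization of an invariant measure $\tilde\sigma$ to produce a bad instance — whereas the paper proceeds via the partition lemma (Lemma~\ref{lem:additive-error}), a Farkas-based limit argument over the \emph{finite} set $\Fcal$ of partition-respecting sign functions (Lemma~\ref{lem:weaker-main}), and a Jacobi-polynomial estimate to close the boundary $[1-\delta,1]$ (Lemma~\ref{lem:upper-interval}, which is why the paper must treat $n=2$ separately). The strategies share a common spirit, but your route has two gaps that I do not think are merely technical.

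The first is the compactness step, which you flag yourself. You assume no \emph{continuous} invariant $K \in \cutp(S^{n-1})$ satisfies all constraints, but the finite intersection property is applied in the Tychonoff closure, which contains discontinuous kernels (for instance $K(t)=[t=1]$ satisfies the cut condition on every finite $U$). If some discontinuous $K$ in the closure is feasible while no continuous one is, the FIP produces no finite certificate at all, and no routine mollification rescues this: smoothing tends to destroy the constraint $1 - K(t) \geq \alpha(1-t)$ precisely near $t = 1$, where both sides vanish. The paper avoids infinite-dimensional compactness entirely by first fixing the partition $\Pcal$: the compactness argument then lives in $[0,1]^{\Fcal}$ with $\Fcal$ finite, and the resulting kernel $\sum_{f \in \Fcal} m(f)\, R(f\otimes f^*)$ is a \emph{finite} convex combination of continuous kernels, so continuity is automatic.

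The second gap is in the discretization step, and this one I believe is fatal as written. The measure $\tilde\sigma = \sum_i \lambda_i \int_{\orto(n)} \delta_{(Tx_i,Ty_i)}\,d\mu(T)$ lives on pairs with inner product in $\{t_1,\dots,t_k\}$, so "sampling from its orbits" — drawing rotations $T_1,\dots,T_N$ and taking the pairs $(T_j x_i, T_j y_i)$ as weighted edges — almost surely yields a disjoint union of $Nk$ edges, i.e., a matching. For a matching the maximum cut separates every edge, so $\sdp_1(A_N)$ equals the total weight of the ordered-pair matrix, which does \emph{not} tend to $\sup_g \int(1 - g(x)g(y))\,d\tilde\sigma$; the latter is strictly smaller whenever any $t_i > -1$, since no measurable $g$ makes $g(x) = -g(y)$ on almost every pair at inner product $t_i$. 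Your claimed inequality $\sdp_1(A_N) \leq \sup_g \int(1-g(x)g(y))\,d\tilde\sigma + o_N(1)$ therefore fails. To repair it the discretized graph must be dense relative to $|V_N|$, so that an optimal cut of $V_N$ is forced to behave like a measurable cut of $S^{n-1}$. That is exactly what the paper's matrix $A_z$ in~\eqref{eq:A-matrix}, indexed by the parts of a fixed partition $\Pcal$, achieves: any $\pm1$-labeling of the parts \emph{is} a partition-respecting $f\colon S^{n-1}\to\{-1,1\}$. A Feige--Schechtman $\epsilon$-net construction could also work, but that requires a uniform concentration bound over all $2^{|V_N|}$ cuts; sparse orbit sampling cannot deliver it.
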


This theorem is similar to the integral representation for the
Grothendieck constant~\cite[Theorem~3.4]{Pisier2012}. The easy
direction is to show that the optimal value of~\eqref{eq:main-prob} is
at most~$\alpha_n$.

\begin{proof}[Proof of the easy direction of Theorem~\ref{thm:main}]
Let $(K, \alpha)$ be a feasible solution of~\eqref{eq:main-prob} and
let $A \in \R^{V \times V}$ be any nonnegative matrix and
$f\colon V \to S^{n-1}$ be a function achieving the maximum in
$\sdp_n(A)$. Note
$\bigl(K(f(x), f(y))\bigr)_{x, y \in V} \in \cutp(V)$. This implies
that there are nonnegative numbers~$\lambda_1$, \dots,~$\lambda_r$
that sum up to~1 and functions~$f_1$,
\dots,~$f_r\colon V \to \{-1,1\}$ such that
\[
  K(f(x), f(y)) = K(f(x) \cdot f(y)) = \lambda_1 f_1(x) f_1(y) +
  \cdots + \lambda_r f_r(x) f_r(y)
\]
for all~$x$, $y \in V$. But then
\[
  \begin{split}
    \sdp_1(A) &\geq \sum_{k=1}^r \lambda_k \sum_{x, y \in V} A(x, y) (1 -
    f_k(x) f_k(y))\\
    &= \sum_{x, y \in V} A(x, y) (1 - K(f(x) \cdot f(y)))\\
    &\geq \alpha \sum_{x, y \in V} A(x, y) (1 - f(x) \cdot f(x))\\
    &=\alpha\sdp_n(A),
  \end{split}
\]
so~$\alpha \leq \alpha_n$.
\end{proof}

A proof that the optimal value of~\eqref{eq:main-prob} is at
least~$\alpha_n$ is given in~\S\ref{sec:proof}, but it works only
for~$n \geq 3$. For~$n = 2$ a direct proof can be given by showing a
feasible solution of~\eqref{eq:main-prob} with objective
value~$\alpha_2$; this was done, in a different language, by Avidor
and Zwick~\cite{AvidorZ2005} and is outlined in~\S\ref{sec:alpha-2},
where a short discussion on how lower bounds for~$\alpha_n$ can be
found is also presented.

Notice that the optimization problem~\eqref{eq:main-prob} is infinite:
the kernel~$K$ lies in an infinite-dimensional space and must satisfy
infinitely many constraints, not to mention that the separation
problem over~$\cutp(U)$ is NP-hard since the maximum-cut problem is
NP-hard~\cite{Karp1972}. In~\S\ref{sec:bounds} we will see how~$K$ can
be parameterized and how the problem can be relaxed (by relaxing the
constraint that~$K$ must be in~$\cutp(S^{n-1})$) and effectively
discretized so it can be solved by computer, providing us with
upper bounds for~$\alpha_n$. From feasible solutions of this
relaxation, instances with large integrality gap can be constructed,
as shown in~\S\ref{sec:bad-instances}.


\section{Proof of Theorem~\ref{thm:main} for~$n \geq 3$}
\label{sec:proof}

The difficult part of the proof is to show that the optimal value
of~\eqref{eq:main-prob} is at least~$\alpha_n$. This is done here
for~$n \geq 3$, and for this we need a few lemmas.

Let~$\mu$ be the Haar measure on the orthogonal group~$\orto(n)$,
normalized so the total measure is~1. The \defi{Reynolds
  operator}~$\rey$ projects a kernel~$K$ onto the space of invariant
kernels by averaging:
\[
  \rey(K)(x, y) = \int_{\orto(n)} K(Tx, Ty)\, d\mu(T)
\]
for all~$x$, $y \in S^{n-1}$. If~$K$ is a continuous kernel, then so
is~$\rey(K)$~\cite[Lemma~5.4]{DeCorteOV2018}, and
if~$f\in L^2(S^{n-1})$, then~$\rey(f \otimes f^*)$ is
continuous~\cite[Lemma~5.5]{DeCorteOV2018}, where~$f \otimes f^*$ is
the kernel mapping~$(x, y)$ to~$f(x) f(y)$.

A function~$f\colon S^{n-1} \to \R$ \defi{respects} a
partition~$\Pcal$ of~$S^{n-1}$ if~$f$ is constant on
each~$X \in \Pcal$; we write~$f(X)$ for the common value of~$f$
in~$X$.

\begin{lemma}
  \label{lem:additive-error}
  If~$n \geq 2$, then for every~$\eta > 0$ there is a
  partition~$\Pcal$ of~$S^{n-1}$ into finitely many measurable sets
  such that for every finite set~$I \subseteq [-1, 1]$ and every
  nonnegative function~$z\colon I \to \R$ there is a
  function~$f\colon S^{n-1} \to \{-1, 1\}$ that respects~$\Pcal$ and
  satisfies
  \begin{equation}
    \label{eq:partition-ineq}
    \sum_{t \in I} z(t) (1 - R(f \otimes f^*)(t)) \geq \sum_{t
      \in I} z(t)(\alpha_n(1 - t) - \eta).
  \end{equation}    
\end{lemma}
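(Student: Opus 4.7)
My plan is to reduce the lemma to the finite-$V$ definition of~$\alpha_n$ applied to a carefully chosen instance on a discretization of $S^{n-1}$. Fix $\eta > 0$. Since $S^{n-1}$ is compact, I partition it into finitely many measurable cells $\Pcal = \{X_1, \dots, X_m\}$ of diameter at most $\delta$, where $\delta > 0$ is a small threshold depending only on $\eta$ (to be pinned down at the end). From each cell I pick a representative $x_i \in X_i$ and set $V_0 = \{x_1, \dots, x_m\}$. Crucially, this partition is fixed \emph{before} seeing $I$ or $z$.

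Given a finite $I \subseteq [-1, 1]$ and a nonnegative $z\colon I \to \R$, I then construct the nonnegative matrix $A_0$ on $V_0$ by
\[
  A_0(x_i, x_j) = \sum_{t \in I} z(t) \, \nu_t(X_i \times X_j),
\]
where $\nu_t$ is the unique $\orto(n)$-invariant Borel probability measure on the orbit $\{(u, v) \in S^{n-1} \times S^{n-1} : u \cdot v = t\}$; for $n \geq 2$ this orbit is $\orto(n)$-transitive, so $\nu_t$ exists. Using $\sdp_1(A_0) \geq \alpha_n \sdp_n(A_0)$ together with the bound $\sdp_n(A_0) \geq \sum_{i, j} A_0(x_i, x_j)(1 - x_i \cdot x_j)$ witnessed by the inclusion $V_0 \hookrightarrow S^{n-1}$, I extract a function $g\colon V_0 \to \{-1, 1\}$ satisfying
\[
  \sum_{i, j} A_0(x_i, x_j)(1 - g(x_i) g(x_j)) \geq \alpha_n \sum_{i, j} A_0(x_i, x_j)(1 - x_i \cdot x_j),
\]
and extend $g$ to $f\colon S^{n-1} \to \{-1, 1\}$ by setting $f \equiv g(x_i)$ on $X_i$, so that $f$ respects $\Pcal$.

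Two short calculations close the argument. First, the invariance of $\nu_t$ under the diagonal $\orto(n)$-action gives the identity
\[
  R(f \otimes f^*)(t) = \int f(u) f(v) \, d\nu_t(u, v) = \sum_{i,j} g(x_i) g(x_j) \, \nu_t(X_i \times X_j),
\]
which summed against $z(t)$ yields $\sum_t z(t)(1 - R(f \otimes f^*)(t)) = \sum_{i,j} A_0(x_i, x_j)(1 - g(x_i) g(x_j))$. Second, since $\int (1 - u \cdot v)\, d\nu_t = 1 - t$ and $|u \cdot v - x_i \cdot x_j| \leq 2\delta$ whenever $u \in X_i, v \in X_j$,
\[
  \sum_{i,j} A_0(x_i, x_j)(1 - x_i \cdot x_j) \geq \sum_{t \in I} z(t)(1 - t) - 2\delta \sum_{t \in I} z(t).
\]
Chaining these three inequalities and choosing $\delta$ so that $2 \alpha_n \delta \leq \eta$ produces~\eqref{eq:partition-ineq}.

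I expect the main technical obstacle to be the measure-theoretic setup: $\nu_t$ must be handled cleanly at the degenerate endpoints $t = \pm 1$, where the orbit collapses to the diagonal or antidiagonal, and one must verify that for step functions $f$ respecting a Borel partition, $R(f \otimes f^*)(t)$ can be assigned a pointwise value equal to $\int f(u) f(v)\, d\nu_t(u,v)$ rather than merely an almost-everywhere value in $t$. Once that groundwork is laid, the remainder is a transparent transfer of the finite-$V$ rounding guarantee back to the sphere, with the Lipschitz constant of the inner product controlling the discretization error uniformly in $I$ and $z$.
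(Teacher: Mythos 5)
Your proposal is correct and follows essentially the same approach as the paper's proof. The paper works directly with the Haar measure on $\orto(n)$ and defines the weight matrix by $A_z(X,Y) = \sum_{t \in I} z(t)\,\mu([u,X] \cap [v_t,Y])$; your $\nu_t(X_i \times X_j)$ is the very same quantity, since $\nu_t$ is exactly the pushforward of $\mu$ under $T \mapsto (Tu, Tv_t)$, and the rest of the argument (discretize, build the weighted instance, invoke $\sdp_1 \geq \alpha_n\sdp_n$, round back to the sphere, control the $O(\delta)$ error in the inner product) tracks the paper step by step.
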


\begin{proof}
Let~$\Pcal$ be any partition of~$S^{n-1}$ into finitely many
measurable sets of small enough diameter such that for all~$X$,
$Y \in \Pcal$, $x$, $x' \in X$, and~$y$, $y' \in Y$, we have
$|x \cdot y - x' \cdot y'| \leq \alpha_n^{-1}\eta$. Such a partition
can be obtained by considering e.g.~the Voronoi cell of each point of
an $\epsilon$-net for~$S^{n-1}$ for small enough~$\epsilon$.

For~$u \in S^{n-1}$ and~$X \in \Pcal$, write
\[
  [u, X] = \{\, T \in \orto(n) : Tu \in X\,\}.
\]
Then~$[u, X]$ is measurable~\cite[Theorem~3.7]{Mattila1995}, so
$\{\, [u, X] : X \in \Pcal\,\}$ is a partition of~$\orto(n)$ into
measurable sets, and hence for any~$u$, $v \in S^{n-1}$ so is the
common refinement
\[
  \{\, [u, X] \cap [v, Y] : \text{$(X, Y) \in \Pcal \times \Pcal$ and
    $[u, X] \cap [v, Y] \neq \emptyset$}\,\}.
\]

Write~$u = (1, 0, \ldots, 0) \in S^{n-1}$ and for~$t \in [-1, 1]$ let
$v_t = (t, (1 - t^2)^{1/2}, 0, \ldots, 0)$, so~$u \cdot v_t =
t$. If~$f\colon S^{n-1} \to \R$ respects~$\Pcal$, then for every
finite~$I \subseteq [-1, 1]$ and every nonnegative~$z\colon I \to \R$
we have
\[
  \begin{split}
    \sum_{t \in I} z(t) (1 - R(f \otimes f^*)(t)) &=
    \sum_{t \in I} z(t) \int_{\orto(n)} 1 - f(Tu) f(Tv_t)\, d\mu(T)\\
    &=\sum_{t \in I} z(t) \sum_{X, Y \in \Pcal} \int_{[u, X] \cap
      [v_t, Y]} 1 - f(Tu) f(Tv_t)\, d\mu(T)\\
    &=\sum_{t \in I} z(t) \sum_{X, Y \in \Pcal} (1 - f(X) f(Y))
    \mu([u, X] \cap [v_t, Y])\\
    &=\sum_{X, Y \in \Pcal} (1 - f(X) f(Y)) \sum_{t \in I} z(t)
    \mu([u, X] \cap [v_t, Y]).
  \end{split}
\]

By considering the matrix~$A_z\colon \Pcal \times \Pcal \to \R$ such
that
\begin{equation}
  \label{eq:A-matrix}
  A_z(X, Y) = \sum_{t \in I} z(t) \mu([u, X] \cap [v_t, Y]),
\end{equation}
we see that finding a function~$f\colon S^{n-1} \to \{-1,1\}$ that
respects~$\Pcal$ and maximizes the left-hand side
of~\eqref{eq:partition-ineq} is the same as finding an optimal
solution of~$\sdp_1(A_z)$, so there is such a function~$f$
satisfying
\begin{equation}
  \label{eq:good-f}
  \sum_{t \in I} z(t) (1 - R(f \otimes f^*)(t)) = \sdp_1(A_z).
\end{equation}

Now let~$g\colon\Pcal \to S^{n-1}$ be such that~$g(X) = x$ for
some~$x \in X$ chosen arbitrarily. Recall that the sets in~$\Pcal$
have small diameter, so
\[
  \begin{split}
    \sdp_n(A_z) &\geq \sum_{X, Y \in \Pcal} A_z(X, Y) (1 - g(X) \cdot
    g(Y))\\
    &=\sum_{t \in I} z(t) \sum_{X, Y \in \Pcal} (1 - g(X) \cdot g(Y))
    \mu([u, X] \cap [v_t, Y])\\
    &=\sum_{t \in I} z(t) \sum_{X, Y \in \Pcal} \int_{[u, X] \cap
      [v_t, Y]} 1 - g(X) \cdot g(Y)\, d\mu(T)\\
    &\geq \sum_{t \in I} z(t) \sum_{X, Y \in \Pcal} \int_{[u, X] \cap
      [v_t, Y]} 1 - (Tu) \cdot (Tv_t) - \alpha_n^{-1}\eta\, d\mu(T)\\
    &=\sum_{t \in I} z(t) \int_{\orto(n)} 1 - t -
    \alpha_n^{-1}\eta\, d\mu(T)\\
    &=\sum_{t \in I} z(t)((1 - t) -\alpha_n^{-1} \eta).
  \end{split}
\]

Now take any finite~$I \subseteq [-1, 1]$ and any
nonnegative~$z\colon I \to \R$. If~$f$ is a function that
respects~$\Pcal$ and for which~\eqref{eq:good-f} holds, then
\[
    \sum_{t \in I} z(t) (1 - R(f \otimes f^*)(t)) = \sdp_1(A_z)
    \geq \alpha_n \sdp_n(A_z)
    \geq \sum_{t \in I} z(t)(\alpha_n(1-t) - \eta),
\]
as we wanted.  
\end{proof}

Lemma~\ref{lem:additive-error} is enough to show the following weaker
version of the difficult direction of Theorem~\ref{thm:main}:

\begin{lemma}
  \label{lem:weaker-main}
  If~$n \geq 2$ and~$1 \geq \delta > 0$, then the optimal value of the
  optimization problem
  \begin{equation}
    \label{eq:main-prob-weaker}
    \optprob{\sup&\alpha\\
      &1 - K(t) \geq \alpha(1 - t)\quad\text{for all~$t \in [-1,
        1-\delta]$},\\
      &\text{$K \in \cutp(S^{n-1})$ is invariant}
    }
  \end{equation}
  is at least~$\alpha_n$.
\end{lemma}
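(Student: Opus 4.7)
The plan is to convert the averaged (weighted-sum-in-$t$) inequality of Lemma~\ref{lem:additive-error} into a pointwise-in-$t$ inequality by an LP-duality/minimax argument, using that on~$[-1, 1-\delta]$ one has $1 - t \geq \delta$, so that an additive error in~$1 - t$ can be absorbed as a multiplicative loss. Given $\eta' > 0$, I would apply Lemma~\ref{lem:additive-error} with~$\eta = \delta\eta'$ to obtain a finite measurable partition~$\Pcal$ of~$S^{n-1}$, and form the finite set
\[
    \Kcal = \{\,\rey(f \otimes f^*) : \text{$f\colon S^{n-1} \to \{-1,1\}$ respects $\Pcal$}\,\}
\]
of continuous invariant kernels. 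Each such kernel lies in~$\cutp(S^{n-1})$: for a finite~$U \subseteq S^{n-1}$ the matrix $(f(Tx)f(Ty))_{x,y \in U}$ is in the polytope~$\cutp(U)$, and the Bochner integral over~$\orto(n)$ stays inside this polytope. So every $K \in \conv(\Kcal)$ is a continuous invariant kernel in~$\cutp(S^{n-1})$, hence a feasible kernel for~\eqref{eq:main-prob-weaker} once the inequality constraint holds.

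The goal is then to find $\lambda$ in the simplex on~$\Kcal$ so that $K_\lambda = \sum_i \lambda_i K_i$ satisfies
\[
    (1 - K_\lambda(t)) - (\alpha_n - \eta')(1 - t) \geq 0 \quad\text{for all $t \in [-1, 1 - \delta]$.}
\]
I would apply Sion's minimax theorem to the continuous bilinear form
\[
    (\lambda, \nu) \mapsto \int_{[-1, 1 - \delta]} \bigl[(1 - K_\lambda(t)) - (\alpha_n - \eta')(1 - t)\bigr]\, d\nu(t)
\]
on the compact convex sets of probability vectors on~$\Kcal$ and of Borel probability measures on~$[-1, 1 - \delta]$ (weak-$*$ topology). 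Since the maximum of a linear function on a simplex is attained at a vertex, existence of a suitable~$\lambda$ is equivalent to the dual statement: for every probability measure~$\nu$ on $[-1, 1 - \delta]$ there is some $K \in \Kcal$ with $\int (1 - K(t))\, d\nu(t) \geq (\alpha_n - \eta') \int (1 - t)\, d\nu(t)$.

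This dual condition need only be checked for finitely supported~$\nu$, by weak-$*$ density and continuity of the finite maximum. For such~$\nu$, Lemma~\ref{lem:additive-error} with $z = \nu$ and $\eta = \delta\eta'$ produces $f$ respecting~$\Pcal$ satisfying
\[
    \sum_t \nu(\{t\})\bigl(1 - \rey(f \otimes f^*)(t)\bigr) \geq \sum_t \nu(\{t\})\bigl(\alpha_n(1 - t) - \delta\eta'\bigr),
\]
and $1 - t \geq \delta$ gives $\alpha_n(1 - t) - \delta\eta' \geq (\alpha_n - \eta')(1 - t)$, completing the dual step. Letting $\eta' \to 0$ then yields the lemma. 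The principal obstacle is precisely this conversion from an averaged to a pointwise statement: finiteness of~$\Pcal$ (hence of~$\Kcal$) is what makes LP duality applicable, and the $1-\delta$ cutoff is exactly the room needed to trade Lemma~\ref{lem:additive-error}'s additive error for a multiplicative loss.
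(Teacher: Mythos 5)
Your proof is correct and rests on the same essential ideas as the paper's: apply Lemma~\ref{lem:additive-error} to reduce to the finite family of kernels $\rey(f\otimes f^*)$ with $f$ respecting a fixed partition~$\Pcal$, then use convex duality to convert the averaged-in-$t$ guarantee of that lemma into a pointwise-in-$t$ guarantee for a suitable convex combination, and finally absorb the additive error on $[-1, 1-\delta]$ as a multiplicative loss. The one place you diverge is the implementation of the duality step. You invoke Sion's minimax theorem directly, with Borel probability measures on $[-1, 1-\delta]$ as the dual variable, and then reduce the dual check to finitely supported measures by weak-$*$ density and continuity of the finite max over~$\Kcal$. The paper instead applies Farkas's lemma to an increasing sequence of finite rational subsets $I_k$ exhausting $[-1,1]\cap\Q$, extracts a convergent subsequence of the resulting weights $m_k$ by compactness of the simplex, and then passes from rationals to all of $[-1,1]$ by continuity of the $\rey(f\otimes f^*)$. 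Your variant is a slightly slicker one-shot argument at the cost of a heavier minimax tool; the paper's is more elementary, using only finite LP duality and Bolzano--Weierstrass. The bookkeeping also matches: your computation $\alpha_n(1-t) - \delta\eta' \geq (\alpha_n - \eta')(1-t)$ for $1-t \geq \delta$ is exactly the paper's trade with $\eta = \alpha_n\epsilon\delta$ in different notation.
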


\begin{proof}
Fix~$\eta > 0$ and let~$\Pcal$ be a partition supplied by
Lemma~\ref{lem:additive-error}. Let~$\Fcal$ be the set of all
functions~$f\colon S^{n-1} \to \{-1,1\}$ that respect~$\Pcal$;
note~$\Fcal$ is finite.

Let~$I_1 \subseteq I_2 \subseteq \cdots$ be a sequence of finite
nonempty subsets of~$[-1, 1]$ whose union is the set of all rational
numbers in~$[-1, 1]$. Suppose there is no~$m_k\colon \Fcal \to \R$
satisfying
\[
  \begin{split}
    &\sum_{f \in \Fcal} (1 - R(f \otimes f^*)(t)) m_k(f) \geq 
    \alpha_n(1-t) - \eta\qquad\text{for all~$t \in I_k$},\\
    &\sum_{f \in \Fcal} m_k(f) = 1,\\
    &m_k \geq 0.
  \end{split}
\]
Farkas's lemma~\cite[\S7.3]{Schrijver1986} says that, if this system
has no solution, then there is~$z\colon I_k \to \R$, $z \geq 0$,
and~$\rho \in \R$ such that
\[
  \begin{split}
    &\rho + \sum_{t \in I_k} z(t) (1 - R(f \otimes f^*)(t)) \leq
    0\qquad\text{for all~$f \in \Fcal$},\\
    &\rho + \sum_{t \in I_k} z(t) (\alpha_n (1-t) - \eta) > 0.
  \end{split}
\]
Together, these inequalities imply that for every~$f \in \Fcal$ we
have
\[
  \sum_{t \in I_k} z(t)(1 - R(f \otimes f^*)(t)) < \sum_{t \in I_k}
  z(t)(\alpha_n(1 - t) - \eta),
\]
a contradiction to the choice of~$\Pcal$.

Since all~$m_k$ lie in~$[0,1]^\Fcal$, which is a compact set, the
sequence~$(m_k)$ has a converging subsequence; say this subsequence
converges to~$m\colon\Fcal \to \R$. Then~$m \geq 0$ and
$\sum_{f \in \Fcal} m(f) = 1$. Moreover,
\begin{equation}
  \label{eq:m-ineq}
  \sum_{f \in \Fcal} (1 - R(f \otimes f^*)(t)) m(f) \geq 
  \alpha_n(1-t) - \eta\qquad\text{for all~$t \in [-1, 1]$}.
\end{equation}
Indeed, the inequality holds for all~$t \in [-1, 1] \cap \Q$.
But~$R(f \otimes f^*)$ is continuous for every~$f$, so the left-hand
side above is a continuous function of~$t$, whence the inequality
holds for every~$t \in [-1, 1]$.

Fix~$1 \geq \delta > 0$ and~$\epsilon > 0$ and
set~$\eta = \alpha_n \epsilon \delta$; let~$m$ be such
that~\eqref{eq:m-ineq} holds. If~$t \leq 1-\delta$,
then~$1-t\geq \delta$ and
\[
  (1-\epsilon)(1-t) = (1-t)-\epsilon(1-t)\leq (1-t)-\epsilon\delta.
\]
So, for~$t \in [-1, 1-\delta]$, the left-hand side of~\eqref{eq:m-ineq}
is at least
\[
  \alpha_n(1-t)-\eta = \alpha_n ((1-t)-\alpha_n^{-1}\eta) \geq
  \alpha_n (1 - \epsilon) (1 - t).
\]

Now $K_\epsilon = \sum_{f \in \Fcal} R(f \otimes f^*) m(f)$ is a
continuous kernel that moreover belongs to~$\cutp(S^{n-1})$. So for
every~$\epsilon > 0$ there is $K_\epsilon \in \cutp(S^{n-1})$ such
that $(K_\epsilon, \alpha_n (1-\epsilon))$ is a feasible solution
of~\eqref{eq:main-prob-weaker}, and by letting~$\epsilon$ approach~0
we are done.
\end{proof}

For~$n \geq 3$, Theorem~\ref{thm:main} can be obtained from
Lemma~\ref{lem:weaker-main} by using the following lemma.

\begin{lemma}
  \label{lem:upper-interval}
  For every~$n \geq 3$, there is~$1 \geq \delta > 0$ such that
  if~$(K, \alpha)$ is any feasible solution
  of~\eqref{eq:main-prob-weaker}, then
  \[
    1 - K(t) \geq \alpha(1-t)\qquad\text{for all~$t \in [1-\delta,
      1]$.}
  \]
\end{lemma}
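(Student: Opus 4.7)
The plan is to exploit the fact that $K \in \cutp(S^{n-1})$ forces a triangle-inequality-type constraint and to iterate this to propagate the assumed bound on $[-1, 1-\delta]$ into the region $[1-\delta, 1]$. Concretely: for any three points $x$, $y$, $z \in S^{n-1}$, the matrix $\bigl(K(a, b)\bigr)_{a, b \in \{x, y, z\}}$ lies in $\cutp(\{x, y, z\})$, so the function $d(a, b) = (1 - K(a, b))/2$ is a convex combination of cut semimetrics on three points and hence itself a semimetric. The triangle inequality, combined with invariance of $K$, then reads
\[
  K(v) \geq K(s) + K(t) - 1
\]
whenever $s$, $t$, $v \in [-1, 1]$ arise as inner products $x \cdot y$, $y \cdot z$, $x \cdot z$ of three points on $S^{n-1}$.

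I would specialize to $s = t$ and $v = 2t^2 - 1$, which is achieved for $n \geq 2$ by placing $x$, $y$, $z$ coplanarly at angular positions $0$, $\theta$, $2\theta$ with $\cos\theta = t$. Rearranging gives the key recursion
\[
  1 - K(t) \geq \tfrac{1}{2}\bigl(1 - K(2t^2 - 1)\bigr).
\]
Set $T(t) = 2t^2 - 1$, $t_0 = t$, $t_{j+1} = T(t_j)$. The identity $1 - T(t) = 2(1 - t)(1 + t)$ shows that $1 - t_j$ at least doubles whenever $t_j \geq 0$, so a finite smallest index $k$ with $t_k \leq 1 - \delta$ exists. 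Iterating the recursion $k$ times gives $1 - K(t) \geq 2^{-k}(1 - K(t_k))$; the hypothesis applied at $t_k \in [-1, 1-\delta]$ gives $1 - K(t_k) \geq \alpha(1 - t_k)$; and iterating $(1 - t_{j+1})/2 = (1 - t_j)(1 + t_j) \geq 1 - t_j$ (valid for $t_j \geq 0$) gives $(1 - t_k)/2^k \geq 1 - t$. Stringing these together yields $1 - K(t) \geq \alpha(1 - t)$.

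The main technical point is choosing $\delta$ so that each iterate $t_0, \dots, t_k$ remains in $[0, 1]$, which is required both for the doubling property above and for the final iterate to satisfy the hypothesis. The only dangerous step is $t_{k-1} \mapsto t_k$, with $t_{k-1} > 1 - \delta$ mapping to $t_k > 2(1 - \delta)^2 - 1$; this is nonnegative whenever $\delta \leq 1 - 1/\sqrt{2}$, so any such fixed $\delta$ works (independently of $n$, although the lemma is stated only for $n \geq 3$).
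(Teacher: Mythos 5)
Your argument is correct and takes a genuinely different route. The paper expands $K$ in Jacobi polynomials via Schoenberg's theorem and then uses the Feldheim--Vilenkin integral representation \eqref{eq:int-rep} to show that $P_k^{(\nu,\nu)}(t)\le t$ near $t=1$ for all $k\ge 2$; that representation needs $\nu=(n-3)/2\ge 0$, which is why the paper's proof is restricted to $n\ge 3$. You instead use only the metric structure of the cut polytope on three points: $d(a,b)=\tfrac12(1-K(a,b))$ is a convex combination of cut semimetrics, hence a semimetric, and realizing the triple of inner products $(t,t,2t^2-1)$ by three coplanar points gives the double-angle recursion $1-K(t)\ge\tfrac12\bigl(1-K(2t^2-1)\bigr)$. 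Iterating this, together with the identity $1-T(t)=2(1-t)(1+t)$, cleanly pushes the bound from $[-1,1-\delta]$ into $[1-\delta,1]$. This is more elementary, avoids orthogonal polynomials entirely, and (as you note) works already for $n=2$, which would make the separate treatment of the $n=2$ case in Theorem~\ref{thm:main} unnecessary for establishing the theorem (though \S\ref{sec:alpha-2} is still needed to compute $\alpha_2$ explicitly). One small inefficiency: you need not arrange $t_k\ge 0$; the chain $(1-t_k)/2^k\ge 1-t_0$ only requires $t_0,\dots,t_{k-1}\ge 0$, which is automatic since $t_j>1-\delta\ge 0$ for $j<k$, while the hypothesis at $t_k$ only needs $t_k\in[-1,1-\delta]$. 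So in fact any $\delta\in(0,1]$ works, not only $\delta\le 1-1/\sqrt2$.
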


The proof of this lemma uses some properties of Jacobi polynomials,
and goes through only for~$n \geq 3$. A proof of
Theorem~\ref{thm:main} for~$n = 2$ is given in~\S\ref{sec:alpha-2}.

The Jacobi polynomials\footnote{See for example the book by
  Szeg\H{o}~\cite{Szego1975} for background on orthogonal polynomials.}
with parameters~$(\alpha, \beta)$, $\alpha$, $\beta > -1$, are the
orthogonal polynomials with respect to the weight function
$(1-t)^\alpha (1+t)^\beta$ on the interval~$[-1,1]$. We denote the
Jacobi polynomial with parameters~$(\alpha, \beta)$ and degree~$k$
by~$P_k^{(\alpha, \beta)}$ and normalize it
so~$P_k^{(\alpha, \beta)}(1) = 1$.

A continuous kernel~$K\colon S^{n-1} \times S^{n-1} \to \R$ is
\defi{positive} if $\bigl(K(x, y)\bigr)_{x,y \in U}$ is positive
semidefinite for every finite and nonempty set~$U \subseteq S^{n-1}$.
Schoenberg~\cite{Schoenberg1942} characterizes continuous, positive,
and invariant kernels via their expansions in terms of Jacobi
polynomials:

\begin{theorem}[Schoenberg's theorem]
  \label{thm:schoenberg}
  A kernel~$K\colon S^{n-1} \times S^{n-1} \to \R$ is continuous,
  positive, and invariant if and only if there are
  numbers~$a_k \geq 0$ satisfying~$\sum_{k=0}^\infty a_k < \infty$
  such that
  \[
    K(x, y) = \sum_{k=0}^\infty a_k P_k^{(\nu, \nu)}(x \cdot
    y)\qquad\text{for all~$x$, $y \in S^{n-1}$}
  \]
with absolute and uniform convergence, where~$\nu = (n-3)/2$.
\end{theorem}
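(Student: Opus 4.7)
The strategy is to apply Schoenberg's theorem to expand $K$ in Jacobi polynomials and then use properties of the coefficients, together with positivity estimates for the shifted Jacobi polynomials, to control $K$ near $t=1$.

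First I would observe that $K$ is automatically a positive kernel: for every finite $U\subseteq S^{n-1}$ the matrix $(K(x,y))_{x,y\in U}$ lies in $\cutp(U)$ and is therefore a convex combination of rank-one matrices $f\otimes f^*$, each of which is positive semidefinite. Combined with the continuity and invariance of $K$, Theorem~\ref{thm:schoenberg} provides an expansion
\[
  K(t)=\sum_{k=0}^\infty a_k\,P_k^{(\nu,\nu)}(t),\qquad a_k\geq 0,\quad \sum_k a_k<\infty,
\]
and since every diagonal entry of a cut-polytope matrix equals $1$ we have $K(1)=\sum_k a_k=1$.

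Next I would read off a lower bound on the slope of $K$ at $t=1$. Taking $t=-1$ in the feasibility constraint (which is legal because $-1\in[-1,1-\delta]$) gives $1-K(-1)\geq 2\alpha$; combined with the parity $P_k^{(\nu,\nu)}(-1)=(-1)^k$ of the ultraspherical polynomials this yields $\sum_{k\text{ odd}}a_k\geq \alpha$. The classical formula $P_k^{(\nu,\nu)}{}'(1)=k(k+n-2)/(n-1)$ is at least $1$ for every $k\geq 1$ (equality only at $k=1$), so
\[
  K'(1)=\sum_{k\geq 1}a_k\,\frac{k(k+n-2)}{n-1}\geq \sum_{k\geq 1}a_k\geq \sum_{k\text{ odd}}a_k\geq \alpha,
\]
which says that the desired inequality holds infinitesimally as $t\to 1^-$.

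The main step, and the principal obstacle, is upgrading this infinitesimal bound to a pointwise inequality on an interval $[1-\delta_n,1]$ whose length depends only on $n$. Writing $\phi_k(t)=(1-P_k^{(\nu,\nu)}(t))/(1-t)$, a polynomial of degree $k-1$ with $\phi_k(1)=P_k^{(\nu,\nu)}{}'(1)$, the differentiation identity $P_k^{(\nu,\nu)}{}'(t)=P_k^{(\nu,\nu)}{}'(1)\cdot P_{k-1}^{(\nu+1,\nu+1)}(t)$ allows me to write
\[
  \frac{\phi_k(t)}{\phi_k(1)}=\frac{1}{1-t}\int_t^1 P_{k-1}^{(\nu+1,\nu+1)}(s)\,ds,
\]
i.e.\ the average over $[t,1]$ of a Jacobi polynomial with shifted parameters. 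The difficulty is that for large $k$ these polynomials oscillate with period $O(1/k)$ near $t=1$, so no choice of $\delta_n$ makes each individual $\phi_k$ monotone on $[1-\delta_n,1]$. The plan is to exploit the fact that when $\nu\geq 0$, that is $n\geq 3$, the shifted parameter satisfies $\nu+1\geq 1$ and the Jacobi polynomials $P_{k-1}^{(\nu+1,\nu+1)}$ enjoy much stronger positivity properties than in the Chebyshev case $\nu=-1/2$: in particular, the average above admits a uniform lower bound independent of $k$ on a suitable interval, which combined with $K'(1)\geq \alpha$ gives $\sum_{k\geq 1}a_k\,\phi_k(t)\geq \alpha$ throughout $[1-\delta_n,1]$. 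The failure of this positivity when $n=2$ is precisely why the argument is restricted to $n\geq 3$.
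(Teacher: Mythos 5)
There is a fundamental mismatch here: the statement you were asked to prove is Schoenberg's theorem itself --- the characterization of continuous, positive, invariant kernels on $S^{n-1}$ as nonnegative, absolutely summable combinations of the Jacobi polynomials $P_k^{(\nu,\nu)}$. Your very first step is ``Theorem~\ref{thm:schoenberg} provides an expansion,'' so the argument is circular as a proof of that statement. What you have actually sketched is an argument for Lemma~\ref{lem:upper-interval} (controlling $1-K(t)$ on an interval $[1-\delta,1]$), which is a different result that merely \emph{uses} Schoenberg's theorem. A genuine proof of Schoenberg's theorem requires the harmonic analysis of $\orto(n)$ acting on $L^2(S^{n-1})$: the decomposition into spaces of spherical harmonics, the addition formula identifying the zonal spherical functions with $P_k^{(\nu,\nu)}(x\cdot y)$, nonnegativity of the coefficients $a_k$ obtained by integrating $K$ against these zonal functions (this is where positivity of the kernel enters), uniform convergence from continuity on the diagonal (a Mercer/Dini-type argument gives $\sum_k a_k = K(1) < \infty$), and, for the converse, positive semidefiniteness of the matrices $\bigl(P_k^{(\nu,\nu)}(x\cdot y)\bigr)_{x,y\in U}$ via the addition theorem. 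None of this appears in your proposal. (The paper itself does not reprove this: it cites Schoenberg's 1942 paper.)

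Even read charitably as an attempt at Lemma~\ref{lem:upper-interval}, the proposal has a gap at exactly the critical point: the claimed uniform-in-$k$ lower bound on $(1-t)^{-1}\int_t^1 P_{k-1}^{(\nu+1,\nu+1)}(s)\,ds$ for $t$ near $1$ is announced as ``the plan'' but never established, and the derivative bound $K'(1)\geq\alpha$ alone does not propagate to a pointwise inequality on a $k$-independent interval without it. The paper's route around this is different and fully explicit: it uses the Feldheim--Vilenkin integral representation to show $P_k^{(\nu,\nu)}(t)\leq p_\nu(t)=\frac{2\nu+1}{2(\nu+1)}t^2+\frac{1}{2(\nu+1)}$ for all $k\geq 2$, checks $p_\nu(t)\leq t$ near $t=1$ when $n\geq 4$ (with a degree-$4$ variant for $n=3$), and then combines $K(t)\leq a_0+(1-a_0)t$ with the bound $a_0\leq 1-\alpha$ extracted from the constraint at $t=-1$. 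If you intend to salvage your approach, you must either prove your positivity claim for the averaged shifted Jacobi polynomials or switch to an explicit domination argument of this kind --- but in either case you would be proving the wrong theorem.
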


Schoenberg's theorem is used in the proof of
Lemma~\ref{lem:upper-interval} and again in~\S\S\ref{sec:alpha-2}
and~\ref{sec:bounds}.

\begin{proof}[Proof of Lemma~\ref{lem:upper-interval}]
Fix~$n \geq 3$ and set~$\nu = (n - 3) / 2$. Claim: there
is~$1 \geq \delta > 0$ such that~$t = P_1^{(\nu, \nu)}(t) \geq
P_k^{(\nu, \nu)}(t)$ for all~$k \geq 2$ and~$t \in
[1-\delta,1]$.

The lemma quickly follows from this claim. Indeed, say~$(K, \alpha)$
is feasible for~\eqref{eq:main-prob-weaker}. Since every matrix
in~$\cutp(U)$ for finite~$U$ is positive semidefinite, every kernel
in~$\cutp(S^{n-1})$ is positive. Hence using Schoenberg's theorem we
write
\[
  K(t) = \sum_{k=0}^\infty a_k P_k^{(\nu,
    \nu)}(t)\qquad\text{for all~$t \in [-1, 1]$}.
\]
Since~$K \in \cutp(S^{n-1})$, we have~$K(1) = 1$,
so~$\sum_{k=0}^\infty a_k = 1$.

As~$(K, \alpha)$ is a feasible solution
of~\eqref{eq:main-prob-weaker}, we know that
\[
  1 - K(-1) \geq \alpha (1 - (-1)) = 2\alpha.
\]
Now~$|P_k^{(\nu, \nu)}(t)| \leq 1$ for all~$k$ and all~$t \in [-1,
1]$, so~$K(-1) \geq a_0 - (1 - a_0)$, whence~$a_0 \leq 1-
\alpha$. The claim implies that, if~$t \in [1-\delta, 1]$, then
\[
  K(t) \leq a_0 + (1 - a_0)t,
\]
so for~$t \in [1-\delta, 1]$ we have
\[
  1 - K(t) \geq 1 - a_0 - (1 - a_0)t = 1 - t - a_0(1-t) \geq 1 - t -
  (1-\alpha)(1-t) = \alpha(1-t),
\]
as we wanted.

To prove the claim, we use the following integral representation of
Feldheim and Vilenkin for the Jacobi polynomials: for~$\nu \geq 0$,
\begin{multline}
  \label{eq:int-rep}
  P_k^{(\nu, \nu)}(\cos\theta) =
  \frac{2\Gamma(\nu+1)}{\Gamma(1/2)\Gamma(\nu + 1/2)} \int_0^{\pi/2}
  \cos^{2\nu}\phi (1 - \sin^2\theta\cos^2\phi)^{k/2}\\
  \cdot
  P_k^{(-1/2,-1/2)}(\cos\theta(1-\sin^2\theta\cos^2\phi)^{-1/2})\,
  d\phi.
\end{multline}
This formula is adapted to our normalization of the Jacobi polynomials
from Corollary~6.7.3 in the book by Andrews, Askey, and
Roy~\cite{AndrewsAR1999}; see also equation~(3.23) in the thesis by
Oliveira~\cite{Oliveira2009}.

For fixed~$\theta$ and~$\phi$, the function
$k \mapsto (1 - \sin^2\theta\cos^2\phi)^{k/2}$ is monotonically
decreasing. Write~$t = \cos\theta$ and recall that the Jacobi
polynomials are bounded by~1 in~$[-1, 1]$; plug~$k = 2$ in the
right-hand side of~\eqref{eq:int-rep} to get
\begin{equation}
  \label{eq:int-upper-bound}
  P_k^{(\nu, \nu)}(t) \leq \frac{2\Gamma(\nu+1)}{\Gamma(1/2)\Gamma(\nu
    + 1/2)} \int_0^{\pi/2} \cos^{2\nu}\phi (1 -
  (1-t^2)\cos^2\phi)\, d\phi
\end{equation}
for all~$t \in [0,1]$ and~$k \geq 2$. For~$\nu = (n-3)/2$
with~$n \geq 4$, we show that there is~$\delta > 0$ such that the
right-hand side above is at most~$t$ for all~$t \in [1-\delta, 1]$;
the case~$n = 3$ will be dealt with shortly.

Let~$m \geq 2$ be an integer. Write~$\cos^m\phi = \cos^{m-1}\phi
\cos\phi$ and use integration by parts to get
\[
  m\int_0^{\pi/2} \cos^m\phi\, d\phi = (m-1) \int_0^{\pi/2} \cos^{m-2}
  \phi\, d\phi.
\]
It follows by induction on~$m$ that, if~$\nu = (n-3)/2$
with~$n \geq 3$, then
\begin{equation}
  \label{eq:int-value}
  \int_0^{\pi/2} \cos^{2\nu}\phi\, d\phi = \frac{\Gamma(1/2) \Gamma(\nu +
    1/2)}{2\Gamma(\nu+1)}.
\end{equation}

The right-hand side of~\eqref{eq:int-upper-bound} is a degree-2
polynomial on~$t$; let us denote it
by~$p_\nu$. Use~\eqref{eq:int-value} to get
\[
  p_\nu(t) = \frac{2\nu + 1}{2(\nu + 1)} t^2 + \frac{1}{2(\nu+1)}.
\]
It is then a simple matter to check that, for~$\nu = (n - 3) / 2$
with~$n \geq 4$, there is~$\delta > 0$ such that~$p_\nu(t) \leq t$ for
all~$t \in [1-\delta, 1]$.

For~$n = 3$ and hence~$\nu = 0$, we have~$p_\nu(t) \geq t$ for
all~$t \in [0, 1]$. In this case, we may take~$k = 4$
in~\eqref{eq:int-rep} and follow the same reasoning, proving that the
degree~4 polynomial obtained will have the desired property. It then
only remains to show that~$P_2^{(0, 0)}$ and~$P_3^{(0, 0)}$ are
below~$P_1^{(0, 0)}$ for~$t$ close enough to~1, and this can be done
directly.
\end{proof}

All that is left to do is to put it all together.

\begin{proof}[Proof of Theorem~\ref{thm:main} for~$n \geq 3$]
In~\S\ref{sec:factor-revealing} we have seen that the optimal value
of~\eqref{eq:main-prob} is at most~$\alpha_n$. The reverse inequality
follows from Lemmas~\ref{lem:weaker-main} and~\ref{lem:upper-interval}
put together.
\end{proof}


\section{Lower bounds for~$\alpha_n$ and a proof of
  Theorem~\ref{thm:main} for~$n = 2$}
\label{sec:alpha-2}

To get a lower bound for~$\alpha_n$, one needs to show a feasible
solution of~\eqref{eq:main-prob}. One such feasible solution, that
shows that~$\alpha_n \geq \alphaGW$, is~$(K_\GW, \alphaGW)$ with
\begin{equation}
  \label{eq:groth}
  K_\GW(x\cdot y) = (2/\pi) \arcsin x\cdot y.
\end{equation}
We encountered this kernel in the introduction. Fix~$e \in S^{n-1}$
and let~$f_\GW\colon S^{n-1} \to \{-1, 1\}$ be such that~$\fGW(x) = 1$
if~$e\cdot x \geq 0$ and~$-1$ otherwise. Recall that Grothendieck's
identity is
\[
  K_\GW(x\cdot y) = R(\fGW \otimes \fGW^*)(x \cdot y),
\]
whence in particular~$K_\GW \in \cutp(S^{n-1})$.

Let~$\tGW \in [-1, 1]$ be such that
$\alphaGW = (1 - K_\GW(\tGW)) / (1 - \tGW)$;
then~$\tGW = -0.68918\ldots$. The easy direction of the following
result is implicit in the work of Avidor and Zwick~\cite{AvidorZ2005}.

\begin{theorem}
  \label{thm:lower-bound}
  If~$n \geq 2$, then~$\alpha_n > \alphaGW$ if and only if there is an
  invariant kernel~$K \in \cutp(S^{n-1})$ such that
  \begin{equation}
    \label{eq:part-ineq}
    1 - K(\tGW) > 1 - K_\GW(\tGW).
  \end{equation}
  If, moreover,~$\alpha_n > \alphaGW$, then there is a measurable
  function~$f\colon S^{n-1} \to \{-1,1\}$ such
  that~\eqref{eq:part-ineq} holds for~$K = R(f \otimes f^*)$.
\end{theorem}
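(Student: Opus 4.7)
The plan is to treat the two implications separately and to obtain the ``moreover'' statement together with the forward direction, both as consequences of Lemma~\ref{lem:additive-error}.

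For the \emph{reverse direction}, assume $K\in\cutp(S^{n-1})$ is invariant with $1-K(\tGW)>1-K_\GW(\tGW)$. The strategy is to perturb $K_\GW$ slightly by $K$: consider $K_\lambda=\lambda K+(1-\lambda)K_\GW$, which lies in $\cutp(S^{n-1})$ by convexity and is invariant. On $[-1,1)$ define the continuous functions $F(t)=(1-K(t))/(1-t)$ and $G(t)=(1-K_\GW(t))/(1-t)$; note $F\geq 0$, since any kernel in $\cutp(S^{n-1})$ is bounded by $1$ in absolute value (the entries of $\cutp(U)$ for $|U|=2$ lie in $[-1,1]$). I will show that $\min_{t\in[-1,1)}(\lambda F(t)+(1-\lambda)G(t))>\alphaGW$ for all sufficiently small $\lambda>0$, so that $(K_\lambda,\alpha)$ is feasible for~\eqref{eq:main-prob} with some $\alpha>\alphaGW$; the easy direction of Theorem~\ref{thm:main}, already proved, then gives $\alpha_n\geq\alpha>\alphaGW$.

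The minimum analysis rests on three ingredients. (i) By hypothesis $F(\tGW)>\alphaGW$, and $F$ is continuous, so there is a neighborhood $N$ of $\tGW$ and $\epsilon_1>0$ with $F>\alphaGW+\epsilon_1$ on $N$. (ii) The point $\tGW$ is the unique minimizer of $G$ on $[-1,1)$ (a classical feature of the Goemans--Williamson analysis), so compactness of $[-1,1-\delta]\setminus N$ for small $\delta>0$ yields $G\geq\alphaGW+\epsilon_2'$ there. (iii) Expanding $\arcsin$ near $1$ shows $G(t)\to+\infty$ as $t\to 1^-$, so $G\geq\alphaGW+\epsilon_2''$ on $[1-\delta,1)$ for $\delta$ chosen small enough. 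Combining (ii) and (iii) gives $G\geq\alphaGW+\epsilon_2$ on $[-1,1)\setminus N$ for some $\epsilon_2>0$. Using $F\geq 0$, the convex combination $\lambda F+(1-\lambda)G$ is at least $\alphaGW+\lambda\epsilon_1$ on $N$ and at least $\alphaGW+\epsilon_2-\lambda(\alphaGW+\epsilon_2)$ off $N$; both exceed $\alphaGW$ whenever $0<\lambda<\epsilon_2/(\alphaGW+\epsilon_2)$.

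For the \emph{forward direction} together with the ``moreover'' refinement, assume $\alpha_n>\alphaGW$ and set $\eta=\tfrac{1}{2}(\alpha_n-\alphaGW)(1-\tGW)>0$. Apply Lemma~\ref{lem:additive-error} with this $\eta$, taking $I=\{\tGW\}$ and $z(\tGW)=1$: this produces a measurable function $f\colon S^{n-1}\to\{-1,1\}$ (respecting a finite measurable partition of the sphere) with
\[
1-\rey(f\otimes f^*)(\tGW)\,\geq\,\alpha_n(1-\tGW)-\eta\,=\,1-K_\GW(\tGW)+\tfrac{1}{2}(\alpha_n-\alphaGW)(1-\tGW)\,>\,1-K_\GW(\tGW).
\]
The kernel $K=\rey(f\otimes f^*)$ is continuous and invariant by construction; and for every finite $U\subseteq S^{n-1}$ the matrix $(K(x,y))_{x,y\in U}$ is the Haar average of the cut matrices $(f(Tx)f(Ty))_{x,y\in U}\in\cutp(U)$, hence lies in $\cutp(U)$ by convexity and closedness of the cut polytope. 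This $K$ simultaneously witnesses the forward implication and the strengthening asserted in the ``moreover'' clause.

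The main obstacle is in the reverse direction: obtaining a \emph{uniform} gap $\epsilon_2$ between $G$ and $\alphaGW$ away from $\tGW$. This is exactly where uniqueness of the Goemans--Williamson minimizer and the divergence of $G$ at $1$ are needed; the remaining steps are continuity arguments and a suitably small choice of $\lambda$.
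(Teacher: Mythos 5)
Your proof is correct and follows essentially the same route as the paper: the reverse direction uses the convex combination $\lambda K + (1-\lambda)K_{\mathrm{GW}}$ and a compactness argument to produce a strictly better feasible pair for~\eqref{eq:main-prob}, and the forward direction together with the ``moreover'' clause is obtained by invoking Lemma~\ref{lem:additive-error} with $I=\{\tGW\}$ and $z\equiv 1$ for an $\eta$ chosen just under $(\alpha_n-\alphaGW)(1-\tGW)$. You spell out the uniform-gap step (uniqueness of the Goemans--Williamson minimizer plus divergence of $(1-K_\GW(t))/(1-t)$ as $t\to 1^-$) in a bit more detail than the paper, which just appeals to knowing $K_\GW$ explicitly, but the content is the same.
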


\begin{proof}
First the easy direction. Suppose there is such a kernel~$K$. Then
\begin{equation}
  \label{eq:f-better}
  1 - K(\tGW) > 1 - K_\GW(\tGW) = \alphaGW (1 - \tGW).
\end{equation}
Both functions
\[
  t \mapsto 1 - K(t)\qquad\text{and}\qquad
  t \mapsto 1 - K_\GW(t)
\]
are continuous in~$[-1,1]$. From~\eqref{eq:f-better}, we see that
there is~$\epsilon > 0$ such that the first function above is at
least~$(\alphaGW+\epsilon) (1 - t)$ in some interval~$I$
around~$\tGW$. The second function above is at least~$\alphaGW(1 - t)$
in~$[-1,1]$ and, if~$\epsilon$ is small enough, then it is at
least~$(\alphaGW + \epsilon)(1-t)$ in~$[-1,1] \setminus I$ (recall
from~\eqref{eq:groth} that we know the second function
explicitly). But then for some~$\lambda \in [0, 1]$ and small
enough~$\epsilon' > 0$ we will have that
\[
  K' = \lambda K + (1 - \lambda) K_\GW \in \cutp(S^{n-1})
\]
is such that~$1 - K'(t) \geq (\alphaGW+\epsilon') (1 - t)$ for
all~$t \in [-1, 1]$, so the optimal value of~\eqref{eq:main-prob}
is greater than~$\alphaGW$ and therefore~$\alpha_n > \alphaGW$ from
the easy direction of Theorem~\ref{thm:main} (proved
in~\S\ref{sec:factor-revealing}).

Now suppose~$\alpha_n > \alphaGW$. For every~$\eta > 0$,
Lemma~\ref{lem:additive-error} gives a measurable
function~$f\colon S^{n-1} \to \{-1,1\}$ such that
\[
  1 - R(f \otimes f^*)(\tGW) \geq \alpha_n (1 - \tGW) - \eta
\]
(take $I = \{\tGW\}$ and~$z = 1$ in the lemma);
set~$K = R(f \otimes f^*)$. Then
\[
    1 - K(\tGW) \geq \alpha_n (\alphaGW^{-1} (1 -
    K_\GW(\tGW))) - \eta.
\]
Since~$\alpha_n / \alphaGW > 1$, we finish by taking~$\eta$ close
enough to~0.
\end{proof}

Theorem~\ref{thm:lower-bound} shows that, to find a lower bound
for~$\alpha_n$, we need to find a better partition of the
sphere~$S^{n-1}$, and this can be done by finding a maximum cut in a
graph defined on a discretization of the sphere (cf.~the proof of
Lemma~\ref{lem:additive-error}). This can be tricky in general: Avidor
and Zwick~\cite{AvidorZ2005} present such a better partition
for~$n = 3$, but their construction is \textit{ad hoc}. For~$n = 2$,
however, one may use the hyperplane rounding procedure to obtain such
a better partition, in a curious application of the Goemans-Williamson
algorithm to improve on itself.

We want to find an invariant kernel~$K \in \cutp(S^{n-1})$
satisfying~\eqref{eq:f-better}, that is, we want to find a good
solution of the following optimization problem:
\[
  \optprob{\sup&1 - K(\tGW)\\
    &\text{$K \in \cutp(S^{n-1})$ is invariant.}
  }
\]
This seems to be a difficult problem, but we can relax the constraint
that~$K \in \cutp(S^{n-1})$ by requiring only that~$K$ be
positive. Then, using Schoenberg's theorem to parameterize~$K$ as
in~\S\ref{sec:proof}, we get the following relaxation of our problem:
\begin{equation}
  \label{eq:dim-2-relax}
  \optprob{\sup&1 - \sum_{k=0}^\infty a_k P_k^{(\nu, \nu)}(\tGW)\\[3pt]
    &\sum_{k=0}^\infty a_k = 1,\\
    &a_k \geq 0\quad\text{for all~$k \geq 0$.}
  }
\end{equation}

For~$n = 2$ and hence~$\nu = -1/2$, the optimal solution
of~\eqref{eq:dim-2-relax} is~$a_k = 0$ for all~$k \neq 4$
and~$a_4 = 1$, as may be proved, for instance, by showing a solution
to the dual of~\eqref{eq:dim-2-relax} having the same objective value
as the solution~$a$ (see~\S\ref{sec:bounds} for a description of the
dual problem of a problem related to~\eqref{eq:dim-2-relax}).

Using formula~(5.1.1) from Andrews, Askey, and
Roy~\cite{AndrewsAR1999}, this means that the optimal kernel is
\[
  K(\cos\theta) = P_4^{(-1/2, -1/2)}(\cos\theta) = \cos4\theta.
\]
If we identify the circle~$S^1$ with the interval~$[0, 2\pi]$, then
the inner product between points~$\theta$, $\phi \in [0, 2\pi]$
is~$\arccos (\theta - \phi)$, so 
\[
  K(\theta, \phi) = \cos 4(\theta-\phi) = \cos 4\theta\cos4\phi + \sin
  4\theta \sin 4\phi.
\]
Taking~$g\colon S^1 \to S^1$ such that
$g(\theta) = (\cos 4\theta, \sin 4\theta)$, we have
$K(\theta, \phi) = g(\theta) \cdot g(\phi)$.

Now, let us round the rank-2 solution~$g$. Let~$e = (1, 0)$ and
set~$f(\theta) = 1$ if~$e \cdot g(\theta) \geq 0$ and~$f(\theta) = -1$
otherwise. The resulting partition is exactly the windmill partition
that, combined with the partition~$\fGW$ of the sphere into two equal
halves, shows that
\[
  \alpha_2 = \frac{32}{25 + 5 \sqrt{5}}
\]
(cf.~Avidor and Zwick~\cite{AvidorZ2005}); see also
Figure~\ref{fig:windmill}.

\begin{figure}[tb]
\begin{center}
\includegraphics{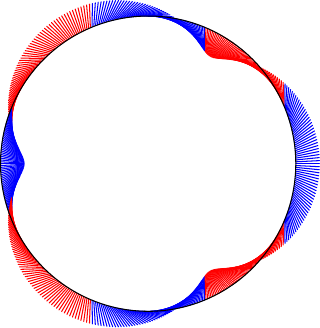}\hskip1cm
\includegraphics{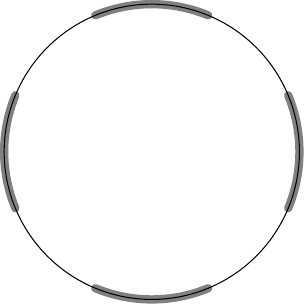}
\end{center}

\caption{On the left we have the unit circle~$S^1$ and, supported on a
  point~$\theta$, the
  vector~$f(\theta) = (\cos 4\theta, \sin 4\theta)$ --- in blue
  if~$\cos4\theta \geq 0$ and in red otherwise. On the right, we have
  in gray the segments of the circle where~$\cos4\theta \geq 0$;
  this is the windmill partition.}
\label{fig:windmill}
\end{figure}

\begin{proof}[Proof of Theorem~\ref{thm:main} for~$n = 2$]
In~\S\ref{sec:factor-revealing} we have seen that the optimal value
of~\eqref{eq:main-prob} is at most~$\alpha_2$. The reverse inequality
is proved by Avidor and Zwick~\cite{AvidorZ2005}: they show how to
pick~$\lambda \in [0, 1]$ such that, if~$f\colon S^1 \to \{-1,1\}$ is
the windmill partition of Figure~\ref{fig:windmill}
and~$\fGW\colon S^1 \to \{-1,1\}$ is the partition into two equal
halves, then~$(K, \alpha)$ with
\[
  K = \lambda R(f \otimes f^*) + (1 - \lambda) R(\fGW\otimes \fGW^*)
\]
and
\[
  \alpha = \frac{32}{25 + 5\sqrt{5}}
\]
is a feasible solution of~\eqref{eq:main-prob}.
Since~$\alpha_2 = \alpha$, we are then done.
\end{proof}

For~$n \geq 3$, the approach outlined above does not work. The optimal
solution of the relaxation~\eqref{eq:dim-2-relax} is always~$a_k = 0$
for all~$k \neq 1$ and~$a_1 = 1$. The hyperplane rounding then gives
the partition~$\fGW$ into two equal halves, therefore not providing a
lower bound for~$\alpha_n$ better than~$\alphaGW$.


\section{Upper bounds for~$\alpha_n$ and bad instances}
\label{sec:bounds}

Let us see how to solve a relaxation of~\eqref{eq:main-prob} in order
to get upper bounds for~$\alpha_n$. The first order of business is to
use Schoenberg's theorem (Theorem~\ref{thm:schoenberg}) to
parameterize~$K$ as
\begin{equation}
  \label{eq:K-expr}
  K(t) = \sum_{k=0}^\infty a_k P_k^{(\nu,
    \nu)}(t)\qquad\text{for all~$t \in [-1, 1]$},
\end{equation}
where~$\nu = (n - 3) / 2$, $a_k \geq 0$ for all~$k$,
and~$\sum_{k=0}^\infty a_k < \infty$.

Say now that~$U \subseteq S^{n-1}$ is a nonempty finite set
and~$Z\colon U \times U \to \R$ and~$\beta \in \R$ are such that
\[
  \sum_{x, y \in U} Z(x, y) A(x, y) \geq \beta
\]
for all~$A \in \cutp(U)$, so~$Z$ and~$\beta$ give a valid constraint
for~$\cutp(U)$. If~$K \in \cutp(S^{n-1})$, then
\[
  \sum_{x, y \in U} Z(x, y) K(x, y) \geq \beta.
\]
Rewriting this inequality using the parametrization of~$K$ we see that
the variables~$a_k$ satisfy the constraint
\[
  \sum_{k=0}^\infty a_k r_k \geq \beta,
\]
where~$r = (r_k)$ is the sequence such that
\[
  r_k = \sum_{x, y \in U} Z(x, y) P_k^{(\nu, \nu)}(x \cdot y).
\]

Let~$\Rcal$ be a finite collection of pairs~$(r, \beta)$, each one
associated with a valid constraint of~$\cutp(U)$ for some finite
set~$U \subseteq S^{n-1}$, as described above. Recall that,
if~$K \in \cutp(S^{n-1})$, then~$K(1) = 1$, and
that~$P_k^{(\nu, \nu)}(1) = 1$ in our normalization. Choose a finite
nonempty set~$I \subseteq [-1, 1]$. Then the following
linear program with infinitely many variables but finitely
many constraints is a relaxation of~\eqref{eq:main-prob}; its optimal
value thus provides an upper bound for~$\alpha_n$:
\begin{equation}
  \label{eq:relax-primal}
  \optprob{\sup&\alpha\\
    &\sum_{k=0}^\infty a_k = 1,\\
    &\alpha(1-t) + \sum_{k=0}^\infty a_k P_k^{(\nu, \nu)}(t) \leq
    1&\text{for all~$t \in I$},\\
    &\sum_{k=0}^\infty a_k r_k \geq \beta&\text{for all~$(r, \beta) \in
      \Rcal$},\\
    &a_k \geq 0&\text{for all~$k \geq 0$}.
  }
\end{equation}

A dual problem for~\eqref{eq:relax-primal} is
\begin{equation}
  \label{eq:relax-dual}
  \optprob{\inf&\lambda+\sum_{t \in I} z(t) - \sum_{(r, \beta) \in
      \Rcal} y(r, \beta) \beta\\[3pt]
    &\sum_{t \in I} z(t) (1 - t) = 1,\\[3pt]
    &\lambda + \sum_{t \in I} z(t) P_k^{(\nu, \nu)}(t) - \sum_{(r,
      \beta) \in \Rcal} y(r, \beta) r_k \geq 0\quad\text{for all~$k
      \geq 0$},\\[3pt]
    &z, y \geq 0.
  }
\end{equation}
It is routine to show that weak duality holds between the two
problems: if~$(a, \alpha)$ is a feasible solution
of~\eqref{eq:relax-primal} and~$(\lambda, z, y)$ is a feasible
solution of~\eqref{eq:relax-dual}, then
\[
  \alpha \leq \lambda+\sum_{t \in I} z(t) - \sum_{(r, \beta) \in
    \Rcal} y(r, \beta) \beta.
\]
So to find an upper bound for~$\alpha_n$ it suffices to find a
feasible solution of~\eqref{eq:relax-dual}.

To find such a feasible dual solution we follow the same approach
presented by DeCorte, Oliveira, and
Vallentin~\cite[\S8]{DeCorteOV2018} for a very similar problem. We
start by choosing a large enough value~$d$ (say~$d = 2000$) and
truncating the series in~\eqref{eq:K-expr} at degree~$d$,
setting~$a_k = 0$ for all~$k > d$. Then, for finite sets~$I$
and~$\Rcal$, problem~\eqref{eq:relax-primal} becomes a finite
linear program. We solve it and from its dual we obtain a
candidate solution~$(\lambda, z, y)$ for the original,
infinite-dimensional dual. All that is left to do is check that this
is indeed a feasible solution, or else that it can be turned into a
feasible solution by slightly increasing~$\lambda$. This verification
procedure is also detailed by DeCorte, Oliveira, and Vallentin
(ibid.,~\S8.3).

Finding a good set~$I \subseteq [-1, 1]$ is easy: one simply takes a
finely spaced sample of points. Finding a good set~$\Rcal$ of
constraints is another issue. The approach is, again, detailed by
DeCorte, Oliveira, and Vallentin (ibid.,~\S8.3); here is an
outline. We start by setting~$\Rcal = \emptyset$. Then, having a
solution of~\eqref{eq:relax-primal}, and having access to a list of
facets of~$\cutp(U)$ for a set~$U$ of~7 elements, numerical methods
for unconstrained optimization are used to find points on the sphere
for which a given inequality is violated. These violated inequalities
are then added to~\eqref{eq:relax-primal} and the process is repeated.

Table~\ref{tab:bounds} shows a list of upper bounds for~$\alpha_n$
found with the procedure described above. These bounds have been
rigorously verified using the approach of DeCorte, Oliveira, and
Vallentin.

\begin{table}[tb]
  \begin{center}
    \begin{tabular}{cccc}
      $n$&{\sl Upper bound}&$n$&{\sl Upper bound}\\[3pt]
      4  & 0.881693 & 12 & 0.878923 \\
      5  & 0.880247 & 13 & 0.878893 \\
      6  & 0.879526 & 14 & 0.878864 \\
      7  & 0.879184 & 15 & 0.878835 \\
      8  & 0.879079 & 16 & 0.878798 \\
      9  & 0.879016 & 17 & 0.878772 \\
      10 & 0.878981 & 18 & 0.878772 \\
      11 & 0.878953 & 19 & 0.878744
    \end{tabular}
  \end{center}
  \caption{Upper bounds for~$\alpha_n$ from a relaxation of
    problem~\eqref{eq:main-prob}. For~$n = 3$, the relaxation gives an
    upper bound of~$0.8854$, not better than~$\alpha_2$. These bounds
    have all been computed considering a same set~$\Rcal$ with~28
    constraints from the cut polytope found heuristically for the
    case~$n = 4$; improvements could possibly be obtained by trying to
    find better constraints for each dimension. The bound
    using~$\Rcal$ decreases more and more slowly after~$n = 19$;
    for~$n = 10000$ one obtains the upper bound~$0.878695$.}
  \label{tab:bounds}
\end{table}


\subsection{Constructing bad instances}
\label{sec:bad-instances}

A feasible solution of~\eqref{eq:relax-dual} gives an upper bound
for~$\alpha_n$, but this upper bound is not constructive, that is, we
do not get an instance of the maximum-cut problem with large
integrality gap. Let us see now how to extract bad instances for the
maximum-cut problem from a solution of~\eqref{eq:relax-dual}.

Let~$I \subseteq [-1,1)$ be a finite nonempty set of inner products
and~$\Rcal$ be a finite set of constraints from the cut polytope.
Say~$(\lambda, z, y)$ is a feasible solution of~\eqref{eq:relax-dual}
and let
\[
  \alpha = \lambda+\sum_{t \in I} z(t) - \sum_{(r, \beta) \in
    \Rcal} y(r, \beta) \beta
\]
be its objective value.

The intuition behind the construction is simple. We consider a graph
on the sphere~$S^{n-1}$, where~$x$, $y \in S^{n-1}$ are adjacent
if~$x\cdot y \in I$ and the weight of an edge between~$x$ and~$y$
is~$z(x \cdot y)$. Bad instances will arise from discretizations of
this infinite graph.

Given a partition~$\Pcal$ of~$S^{n-1}$ into finitely many sets, denote
by~$\delta(\Pcal)$ the maximum diameter of any set in~$\Pcal$.
Let~$(\Pcal_m)$ be a sequence of partitions of~$S^{n-1}$ into finitely
many measurable sets such that~$\Pcal_{m+1}$ is a refinement
of~$\Pcal_m$ and
\[
  \lim_{m\to\infty} \delta(\Pcal_m) = 0.
\]

For~$m \geq 0$, let~$A_z^m\colon \Pcal_m \times \Pcal_m \to \R$ be the
matrix defined in~\eqref{eq:A-matrix} for the
partition~$\Pcal = \Pcal_m$ and the function~$z$. Since~$\Pcal_{m+1}$
is a refinement of~$\Pcal_m$, both limits
\[
  \lim_{m\to\infty} \sdp_1(A^m_z)\qquad\text{and}\qquad
  \lim_{m\to\infty} \sdp_n(A^m_z)
\]
exist, as the sequences of optimal values are monotonically increasing
and bounded. As~$z \neq 0$, both limits are positive, hence
\begin{equation}
  \label{eq:part-limit}
  \lim_{m\to\infty} \frac{\sdp_1(A^m_z)}{\sdp_n(A^m_z)}
\end{equation}
exists. Claim: the limit above is at most~$\alpha$.

Once the claim is established, we are done: for every~$\epsilon > 0$,
by taking~$m$ large enough (that is, by taking a fine enough
partition) we have
\[
  \frac{\sdp_1(A^m_z)}{\sdp_n(A^m_z)} \leq \alpha + \epsilon,
\]
that is, we get a sequence of bad instances for the maximum-cut problem.

To prove the claim, suppose~\eqref{eq:part-limit} is at
least~$\alpha + \epsilon$ for some fixed~$\epsilon > 0$. Then for all
large enough~$m$ we have
\[
  \sdp_1(A^m_z) \geq (\alpha + \epsilon) \sdp_n(A^m_z).
\]
Following the proof of Lemma~\ref{lem:additive-error}, this means that
for every large enough~$m$ there is a function~$f_m\colon S^{n-1} \to
\{-1,1\}$ that respects~$\Pcal_m$ and satisfies
\[
  \sum_{t \in I} z(t)(1 - R(f_m\otimes f_m^*)(t)) \geq
  \sum_{t \in I} z(t)((\alpha + \epsilon) (1-t) - \eta_m),
\]
where~$\eta_m \geq 0$ and~$\eta_m \to 0$ as~$m \to \infty$.

Use the feasibility of~$(\lambda, z, y)$ for~\eqref{eq:relax-dual}
together with the definition of~$\alpha$ to get from the above
inequality that
\begin{equation}
  \label{eq:almost-contradiction}
  \lambda + \sum_{t \in I} z(t) R(f_m \otimes f_m^*)(t) - \sum_{(r,
    \beta) \in \Rcal} y(r, \beta) \beta \leq -\epsilon + \eta_m\sum_{t \in I} z(t).
\end{equation}

Next, note that~$R(f_m \otimes f_m^*) \in \cutp(S^{n-1})$. Using
Schoenberg's theorem (Theorem~\ref{thm:schoenberg}), write
\[
  R(f_m \otimes f_m^*)(t) = \sum_{k=0}^\infty a_k P_k^{(\nu, \nu)}(t),
\]
where~$\nu = (n - 3) / 2$,~$a_k \geq 0$,
and~$\sum_{k=0}^\infty a_k = 1$. Use again the feasibility
of~$(\lambda, z, y)$ for~\eqref{eq:relax-dual} together
with~\eqref{eq:almost-contradiction} to get
\[
  \begin{split}
    0 &\leq \sum_{k=0}^\infty a_k \biggl( \lambda + \sum_{t \in I} z(t)
    P_k^{(\nu, \nu)}(t) - \sum_{(r, \beta) \in \Rcal} y(r, \beta)
    r_k\biggr)\\
    &\leq \lambda + \sum_{t \in I} z(t) R(f_m \otimes f_m^*)(t) -
    \sum_{(r, \beta) \in \Rcal} y(r, \beta) \beta\\
    &\leq -\epsilon + \eta_m\sum_{t \in I} z(t).
  \end{split}
\]
Since~$\epsilon > 0$ and~$\eta_m \to 0$ as~$m \to \infty$, by
taking~$m$ large enough we get a contradiction, proving the claim.


\section*{Acknowledgements}

We thank the referees for valuable suggestions that improved the
paper.


\bibliographystyle{amsplain}


\begin{dajauthors}
  \begin{authorinfo}[fmof]
    F.M. de Oliveira Filho\\
    Delft Institute of Applied Mathematics\\
    Delft University of Technology\\
    Van Mourik Broekmanweg~6, 2628 XE Delft, The Netherlands.\\
    \url{fmario@gmail.com}
\end{authorinfo}

\begin{authorinfo}[frank]
  F.~Vallentin\\
  Mathematisches Institut\\
  Universit\"at zu K\"oln\\
  Weyertal~86--90, 50931 K\"oln, Germany.\\
  \url{frank.vallentin@uni-koeln.de}
\end{authorinfo}
\end{dajauthors}

\end{document}